\documentclass[12pt]{amsart}

\usepackage{amssymb}
\usepackage{amsfonts}
\usepackage{amsmath}
\usepackage{mathrsfs}
\usepackage[curve,matrix,arrow,dvips,ps]{xy}

\def\C{\mathrm{C}}
\def\CA{{\mathscr{A}}}
\def\CF{{\mathcal{F}}}
\def\CK{{\mathscr{K}}}
\def\CX{{\mathfrak{X}}}
\def\CY{{\mathfrak{Y}}}
\def\Hom{\mathrm{Hom}}
\def\Br{\mathrm{Br}}
\def\Id{\mathrm{Id}}
\def\Z{\mathrm{Z}}

\newtheorem{thm}[equation]{Theorem}

\theoremstyle{definition}
\newtheorem{defi}[equation]{Definition}
\newtheorem{exm}[equation]{Example}

\begin{document}
\title{Commuting categories for blocks and fusion systems}                                                    
\author{Adam Glesser}
\address{Suffolk University, 8 Ashburton Place, Boston, MA 02108, USA}
\email{aglesser@suffolk.edu}
\author{Markus Linckelmann}
\address{Institute of Mathematics, University of Aberdeen, 
Aberdeen AB24 3UE, UK}
\email{m.linckelmann@abdn.ac.uk}

\date{\today}

\begin{abstract}
We extend the notion of a commuting poset for a finite group
to $p$-blocks and fusion systems, and we generalize a result,
due originally to Alperin and proved independently by Aschbacher 
and Segev, to commuting graphs of blocks, with a very short proof 
based on the $G$-equivariant version, due to Th\'evenaz and Webb, 
of a result of Quillen.
\end{abstract}

\thanks{The authors completed much of this work while enjoying the hospitality of MSRI in 2008.}
\subjclass{20C20, 20E15, 55P10}
\keywords{Commuting graph, $p$-blocks, fusion systems}

\maketitle

Let $k$ be a field of prime characteristic $p$. A {\it block of
a finite group} $G$ is a primitive idempotent $b$ in $\Z(kG)$. 
A $b$-{\it Brauer pair} is a pair $(Q,e)$ consisting of a
$p$-subgroup $Q$ of $G$ and a block $e$ of $\C_G(Q)$ satisfying
$\Br_Q(b)e \neq 0$, where $\Br_Q : (kG)^Q\to$ $k\C_G(Q)$ is the
Brauer homomorphism; the set of $b$-Brauer pairs is a $G$-poset with
respect to the conjugation action of $G$ (see \cite{Thev} for more details 
and background  material on block theory). We denote by $\CA(b)$ 
the $G$-poset containing all $b$-Brauer pairs $(Q,e)$ 
such that $Q$ is nontrivial and elementary abelian.

Two subgroups $R$, $R'$ of $G$ are said to {\it commute} if they
commute elementwise; that is, if $[R,R']=$ $1$.  For any nonempty 
set $\kappa$ of pairwise commuting subgroups of $G$ we denote by 
$\Pi\kappa$ the product in $G$ of all subgroups belonging to 
$\kappa$; this is clearly a subgroup of $G$. If all elements of 
$\kappa$ are $p$-subgroups (respectively, abelian subgroups) of $G$, 
then $\Pi\kappa$ is a $p$-subgroup (respectively, abelian subgroup) 
of $G$. For any abelian subgroup $Q$ of $G$ we denote by $c(Q)$ the set
of subgroups of order $p$ of $Q$.

\begin{defi}
Let $G$ be a finite group and $b$ a block of $G$. The {\it commuting 
poset of $b$} is the $G$-poset $\CK(b)$ whose elements are pairs 
$(\kappa, e)$, where $\kappa$ is a nonempty set of pairwise commuting 
subgroups of order $p$ of $G$ and where $e$ is a block of 
$\C_G(\Pi\kappa)$ such that $(\Pi\kappa,e)$ is a $b$-Brauer pair, with 
partial order given by 
\[
(\lambda,f)\leq (\kappa,e) \text{, if } 
	\begin{cases} 
	\lambda\subseteq \kappa, \text{and }\\ 
	(\Pi\lambda,f)\leq (\Pi\kappa,e)
	\end{cases}
\]
for $(\kappa,e)$, $(\lambda,f) \in$ $\CK(b)$.
\end{defi}

If $b$ is the principal block of $G$ then $\CK(b)$ is the clique complex
$\CK_p(G)$ of the commuting graph $\Lambda_p(G)$, where the notation is 
as in \cite{As}. For nonprincipal blocks, however, $\CK(b)$ need not be 
the clique complex of a graph (e.g., see Example \ref{nonclique}).

Given a $G$-poset $\CX$ we denote by $\Delta \CX$ the $G$-simplicial 
complex whose set of $n$-simplices consists of 
all chains of $n$ proper inclusions in $X$, 
where $n \geq 0$. For any simplicial complex $\CY$, we denote the 
geometric realization of $\CY$ by $|\CY|$. Two $G$-spaces $X$ and $Y$ are called {\it $G$-homotopically equivalent} if there are $G$-equivariant maps $f : X \to Y$, $g : Y \to X$ 
and $G$-equivariant homotopies $h : I \times X \to X$, 
$h' : I \times Y \to Y$ such that $h(0,-) = \Id_A$, $h(1,-) = f$, 
$h'(0,-) = \Id_Y$, and $h'(1,-) = g$, where the unit interval $I = [0,1]$ 
is viewed as a $G$-space with the trivial $G$-action. Two $G$-posets $\CX$ and 
$\CY$ are called {\it $G$-homotopically equivalent} if the $G$-spaces 
$|\Delta \CX|$ and $|\Delta \CY|$ are $G$-homotopically equivalent. 
By the $G$-equivariant version \cite[(1.1)]{ThWe} of \cite[1.3]{Qu78},
in order to show that $\CX$ and $\CY$ are $G$-homotopically equivalent, it
suffices to find $G$-equivariant functors
$\Phi : \CX\to$ $\CY$ and $\Psi : \CY\to$ $\CX$ such that there 
is a natural transformation between $\Id_\CX$ and $\Psi\circ\Phi$ 
(in either direction) and a natural transformation between $\Id_\CY$ 
and $\Phi\circ\Psi$.

\begin{thm} \label{theorem1}
Let $b$ be a block of a finite group $G$. The maps:
\\

\mbox{
$\Phi : \begin{cases}
       & \CA(b) \rightarrow \CK(b) \\
       & (Q,e) \mapsto (c(Q), e)
\end{cases}$ 
\qquad and \qquad 
$\Psi : \begin{cases}
       & \CK(b)\rightarrow \CA(b) \\
       & (\kappa,e) \mapsto (\Pi\kappa,e)
\end{cases}$
}
\\

are inverse $G$-homotopy equivalences.
\end{thm}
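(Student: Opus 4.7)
The plan is to apply the $G$-equivariant Quillen criterion recalled just before the theorem: it suffices to check that $\Phi$ and $\Psi$ are well-defined $G$-equivariant poset maps whose composites are connected to the identities by natural transformations. Since the targets are posets (viewed as categories), a natural transformation is simply a pointwise inequality, so the verification reduces to three routine points.

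First I would check well-definedness. For $(Q,e)\in\CA(b)$, the set $c(Q)$ is nonempty (as $Q$ is nontrivial), consists of pairwise commuting subgroups of order $p$ (since $Q$ is elementary abelian), and satisfies $\Pi c(Q)=Q$; hence $(\Pi c(Q),e)=(Q,e)$ is a $b$-Brauer pair and $(c(Q),e)\in\CK(b)$. Conversely, for $(\kappa,e)\in\CK(b)$, the subgroup $\Pi\kappa$ is a nontrivial elementary abelian $p$-subgroup and, by definition of $\CK(b)$, $(\Pi\kappa,e)$ is a $b$-Brauer pair, so $(\Pi\kappa,e)\in\CA(b)$. $G$-equivariance of both maps is immediate from the $G$-equivariance of $Q\mapsto c(Q)$ and $\kappa\mapsto\Pi\kappa$.

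Next I would verify order-preservation. For $\Psi$ this is literally built into the definition of the order on $\CK(b)$. For $\Phi$, given $(Q,e)\le(Q',e')$ in $\CA(b)$, one has $Q\le Q'$ and hence $c(Q)\subseteq c(Q')$; the required condition $(\Pi c(Q),e)\le(\Pi c(Q'),e')$ reduces, via $\Pi c(Q)=Q$ and $\Pi c(Q')=Q'$, to the original inequality $(Q,e)\le(Q',e')$.

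Finally I would compute the two composites. Since $\Pi c(Q)=Q$ for every elementary abelian $Q$, we obtain $\Psi\circ\Phi=\Id_{\CA(b)}$ on the nose, which supplies the required natural transformation trivially. For the other composite, $\Phi\circ\Psi(\kappa,e)=(c(\Pi\kappa),e)$; each element of $\kappa$ is a subgroup of order $p$ of $\Pi\kappa$, so $\kappa\subseteq c(\Pi\kappa)$, while the inequality $(\Pi\kappa,e)\le(\Pi\kappa,e)$ holds trivially, giving $(\kappa,e)\le\Phi\Psi(\kappa,e)$ in $\CK(b)$. This pointwise inequality is exactly a natural transformation $\Id_{\CK(b)}\Rightarrow\Phi\circ\Psi$, and the Thévenaz--Webb criterion recalled above then delivers the $G$-homotopy equivalence. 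I do not expect a genuine obstacle; the only point requiring mild care is tracking the Brauer-pair order when checking order-preservation of $\Phi$, which is handled by the observation that $\Pi c(Q)=Q$ for elementary abelian $Q$.
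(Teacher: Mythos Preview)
Your proposal is correct and follows exactly the same approach as the paper's proof: verify that $\Phi$ and $\Psi$ are $G$-equivariant poset maps, note that $\Psi\circ\Phi=\Id_{\CA(b)}$, and exhibit the natural transformation $\Id_{\CK(b)}\Rightarrow\Phi\circ\Psi$ via $(\kappa,e)\le(c(\Pi\kappa),e)$. Your version simply spells out the routine checks (well-definedness, $\Pi c(Q)=Q$, etc.) that the paper declares obvious.
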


\begin{proof}
The maps $\Phi$, $\Psi$ are obviously order preserving and
$G$-equivariant. We have $\Psi\circ\Phi=\Id_{\CA(b)}$.
There is a natural transformation $\Id_{\CK(b)}\rightarrow$
$\Phi\circ\Psi$ given by $(\kappa,e)\leq (c(\Pi\kappa),e)$,
which shows that $\Psi$ is a $G$-homotopy inverse of $\Phi$.
\end{proof}

Applied to principal blocks, this theorem yields, in 
particular, a proof of the fact, due independently 
to Alperin \cite[Theorem 3]{Al2} and to Aschbacher and Segev 
\cite[9.7]{AsSe}, that $\CK_p(G)$ and $\CA_p(G)$ have the same 
homotopy type (see also \cite[5.2]{As}).  The $G$-orbit space of 
$\CK(b)$ admits a generalization to fusion systems and, in fact, 
to arbitrary categories on finite $p$-groups (cf. \cite[2.1]{Linfus}).

\begin{defi}
Let $\CF$ be a category on a finite $p$-group $P$. The  {\it commuting 
category} of $\CF$ is the category $\CK(\CF)$ whose objects are the 
nonempty sets of pairwise commuting subgroups of $P$ of order $p$,
and for objects $\kappa, \lambda \in \CK(\CF)$, 
\[
\Hom_{\CK(\CF)}(\kappa, \lambda) = 
   \{\psi \in \Hom_\CF(\Pi \kappa, \Pi \lambda) \mid 
   \text{if $Q \in \kappa$, then $\psi(Q) \in \lambda$}.\}
\]
The composition of morphisms in $\CK(\CF)$ is induced by the usual
composition of group homomorphisms. We denote by $[\CK(\CF)]$ the 
poset consisting of the isomorphism classes $[\kappa]$ of objects 
$\kappa$ of $\CK(\CF)$ with partial order given by
\[
[\kappa] \leq [\lambda] \text{, if } \Hom_{\CK(\CF)}(\kappa, \lambda) \neq 
\varnothing
\]
for $\kappa$, $\lambda\in \CK(\CF)$. 
\end{defi}

Clearly $\CK(\CF)$ is an $EI$-category. 
As a consequence of results in \cite{AlBr}, any choice of a maximal 
$b$-Brauer pair $(P,e)$ of a block $b$ of a finite group $G$ determines 
a category $\CF_{(P,e)}(G,b)$ on $P$ that, if $k$ is large enough, is a 
saturated fusion system (see e.g., \cite[\S 3.3]{Kessar} for details 
and further references).

\begin{thm}\label{theorem2}
Let $b$ be a block of a finite group $G$, let $(P,e_P)$ be a 
maximal $b$-Brauer pair and let $\CF =$ $\CF_{(P,e_P)}(G, b)$. 
We have an isomorphism of posets
$$[\CK(\CF)] \cong \CK(b)/G$$
mapping the isomorphism class of an object $\kappa\in\CK(\CF)$ to 
the $G$-conjugacy class of the unique Brauer pair $(\Pi\kappa,e)$ 
contained in $(P,e_P)$.
\end{thm}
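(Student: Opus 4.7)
The plan is to verify that the assignment
\[
\Theta : [\kappa]\longmapsto G\cdot(\kappa, e_{\Pi\kappa}),
\]
where $e_{\Pi\kappa}$ denotes the unique block of $\C_G(\Pi\kappa)$ with $(\Pi\kappa, e_{\Pi\kappa})\leq (P,e_P)$, is a well-defined order-preserving bijection whose inverse is also order-preserving. I would rely throughout on the following standard properties of $\CF = \CF_{(P,e_P)}(G,b)$ (cf.\ \cite{AlBr}): every $b$-Brauer pair is $G$-conjugate to a unique Brauer pair contained in $(P,e_P)$; for every $Q\leq P$ there is a unique block $e_Q$ of $\C_G(Q)$ with $(Q,e_Q)\leq (P,e_P)$; the set $\Hom_\CF(Q,R)$ for $Q,R\leq P$ consists of the maps $c_g$ with $g\in G$ satisfying ${}^g(Q,e_Q)\leq (R,e_R)$; and containment of Brauer pairs both contained in $(P,e_P)$ reduces to containment of the underlying subgroups.

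First, well-definedness and bijectivity. Any $\CK(\CF)$-isomorphism $\psi : \kappa\to\lambda$ is an $\CF$-isomorphism $\Pi\kappa\to\Pi\lambda$ sending $\kappa$ bijectively onto $\lambda$, hence of the form $c_g$ for some $g\in G$ with ${}^g(\Pi\kappa, e_{\Pi\kappa}) = (\Pi\lambda, e_{\Pi\lambda})$ and ${}^g\kappa=\lambda$; so the $G$-orbits of $(\kappa, e_{\Pi\kappa})$ and $(\lambda, e_{\Pi\lambda})$ in $\CK(b)$ coincide. Surjectivity is immediate by conjugating any $(\kappa,e)\in\CK(b)$ into $(P,e_P)$. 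For injectivity, reverse the argument: if $\Theta([\kappa])=\Theta([\lambda])$, pick $g\in G$ with ${}^g(\kappa, e_{\Pi\kappa}) = (\lambda, e_{\Pi\lambda})$; then $c_g$ and $c_{g^{-1}}$ are mutually inverse $\CF$-morphisms whose restrictions interchange $\kappa$ and $\lambda$, producing an isomorphism in $\CK(\CF)$.

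For the partial order, if $\psi = c_g\in\Hom_{\CK(\CF)}(\kappa, \lambda)$ then ${}^g\kappa\subseteq\lambda$ and ${}^g\Pi\kappa\leq\Pi\lambda$; since both pairs lie in $(P,e_P)$, subgroup containment forces ${}^g(\Pi\kappa, e_{\Pi\kappa})\leq(\Pi\lambda, e_{\Pi\lambda})$, whence $\Theta([\kappa])\leq\Theta([\lambda])$. Conversely, given $\Theta([\kappa])\leq\Theta([\lambda])$, choose representatives $(\kappa', e')\leq(\lambda'', e'')$ in $\CK(b)$ and a single $g\in G$ with $g\cdot(\Pi\lambda'', e'')\leq(P,e_P)$; by transitivity also $g\cdot(\Pi\kappa', e')\leq(P,e_P)$, so ${}^g\kappa'$ and ${}^g\lambda''$ are objects of $\CK(\CF)$ in the classes $[\kappa]$ and $[\lambda]$, and the inclusion $c_1 : {}^g\Pi\kappa'\hookrightarrow{}^g\Pi\lambda''$ provides a morphism in $\CK(\CF)$ from ${}^g\kappa'$ to ${}^g\lambda''$.

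The main obstacle is the last step: one must promote a comparability of $G$-orbits to a comparability of concrete representatives both living inside the maximal Brauer pair $(P,e_P)$. This uses nothing beyond the maximality of $(P,e_P)$, which ensures that containment of Brauer pairs inside $(P,e_P)$ collapses to subgroup containment, so that the identity conjugation becomes a legitimate $\CF$-morphism witnessing the comparability.
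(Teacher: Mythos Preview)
Your argument is correct and follows essentially the same route as the paper's proof, which simply writes down the inverse map $\eta:[(\kappa,e)]\mapsto[\kappa^g]$ (with $g$ chosen so that $(\Pi\kappa,e)^g\leq(P,e_P)$) and leaves the verifications to the reader; you have supplied those verifications in full. One small correction to your list of ``standard properties'': it is not true that every $b$-Brauer pair is $G$-conjugate to a \emph{unique} Brauer pair contained in $(P,e_P)$ (two distinct $P$-conjugate subgroups of $P$ already give a counterexample); fortunately you never use this uniqueness, only the existence of such a conjugate together with the (correct) uniqueness of $e_Q$ for each fixed $Q\leq P$.
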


\begin{proof}
For $(\kappa, e) \in$ $\CK(b)$, let $[(\kappa, e)]$ denote its 
$G$-conjugacy class. For elements $(\kappa, e), (\lambda, f) \in$ $\CK(b)$, 
one has $[(\kappa, e)] =$ $[(\lambda, f)]$ if and only if there exists 
$g \in G$ such that $\kappa^g = $ $\lambda$ and $e^g =$ $f$. Define 
a poset map $\eta: \CK(b)/G  \to $ $[\CK(\CF)]$ by setting 
$\eta([(\kappa, e)]) =$ $[\kappa^g]$,
where $g \in G$ such that $(\Pi \kappa, e)^g \leq$ $(P, e_P)$.
One verifies that this map is the inverse of the given map in
the statement.
\end{proof}

\begin{exm} \label{nonclique}
The following example was communicated to the authors by R. Kessar.
Suppose $p=2$. Set $G=$ $S_n$, where $n\geq 6$ is an integer such 
that $kG$ has a block $b$ with a dihedral defect group $P\cong$ $D_8$ 
of order $8$. 
By results in \cite{PuigNakayama}, $b$ is of principal type; that
is, for any $2$-subgroup $Q$ of $G$ either $\Br_Q(b)=$ $0$ or
$\Br_Q(b)$ is a block of $k\C_G(Q)$. Moreover, $P$ may be chosen as a 
Sylow $2$-subgroup of $S_4$, canonically embedded into $G$ and such that 
$P$ contains the involutions $x= (1\;2)$, $y= (3\;4)$. Setting $z = (5\;6)$, 
we have $x,z \in P^{(3\;5)(4\;6)}$ and $y,z \in P^{(1\;5)(2\;6)}$.
Since $b$ is of principal type, there are unique
blocks $e_x$, $e_y$, $e_z$ of $k\C_G(x)$, $k\C_G(y)$, $k\C_G(z)$,
respectively, and unique blocks $e_{xy}, e_{xz}, e_{yz}$ of $k\C_G(\langle x,y \rangle)$, $k\C_G(\langle x,z \rangle)$, $k\C_G(\langle y,z \rangle)$, respectively, giving the following inclusions of $b$-Brauer pairs:

\[
\xymatrix{
(\langle x,y \rangle, e_{xy}) \ar@{-}[dd] \ar@{-} '[dr][ddrr] & & (\langle x,z \rangle, e_{xz}) \ar@{-}[ddll] \ar@{-}[ddrr] & & (\langle y,z \rangle, e_{yz}) \ar@{-}'[dl][ddll] \ar@{-}[dd] \\ & & & & \\ (\langle x \rangle,e_x) \ar@{-}[drr] & & (\langle y \rangle,e_y) \ar@{-}[d] & & (\langle z\rangle ,e_z) \ar@{-}[dll] \\ & & (1,b) & & 
}
\]

Suppose that $\Gamma$ is a graph whose clique complex is
$\CK(b)$. The $b$-Brauer pairs
$(\langle x\rangle, e_x)$, 
$(\langle y\rangle, e_y)$, and $(\langle z\rangle, e_z)$ are
minimal in the poset $\CK(b)$ and are pairwise contained in a common $b$-Brauer pair, implying that the graph $\Gamma$ has
a clique of the form:

\[
\xymatrix@!@-1pc{
(\langle x\rangle,e_x) \ar@{-}[rr] \ar@{-}[dr] & & (\langle y\rangle,e_y) \ar@{-}[dl] \\ & (\langle z\rangle,e_z) &
}
\]

However, the corresponding clique is not an element of the poset $\CK(b)$ because the group $\langle x,y,z\rangle$ is not contained in a
defect group of $b$. This contradiction shows that there
is no graph whose clique complex yields $\CK(b)$ and explains 
why we have refrained from defining a commuting graph of $b$ 
in this way.
\end{exm}

\end{document}